\newtheorem{thm}{Theorem}
\newtheorem{cor}{Corollary}
\theoremstyle{remark}
\newtheorem*{rem}{Remark}
\newtheorem{ex}{Example}
\theoremstyle{definition}
\newtheorem*{df}{Definition}
\newcommand{\R}{\mathbb{R}}
\newcommand{\Q}{\mathbb{Q}}
\newcommand{\Z}{\mathbb{Z}}
\newcommand{\N}{\mathbb{N}}
\renewcommand{\S}{\mathcal{S}}
\newcommand{\eps}{\varepsilon}
\newcommand{\f}{\varphi}
\renewcommand{\(}{\left(} \renewcommand{\)}{\right)}
\renewcommand{\[}{\left[} \renewcommand{\]}{\right]}
\begin{document}

\title[Characterizations of derivations]{Characterizations of derivations on spaces of smooth functions}

\author{W\l{}odzimierz Fechner}
\address{Institute of Mathematics, Lodz University of Technology, al. Politechniki 8, 93-590 \L\'od\'z, Poland}
\email{wlodzimierz.fechner@p.lodz.pl}

\author{Aleksandra \'Swi\k{a}tczak}
\address{Institute of Mathematics, Lodz University of Technology, al. Politechniki 8, 93-590 \L\'od\'z, Poland}
\email{aleswi97@gmail.com}

\begin{abstract}
We provide a list of equivalent conditions under which an additive operator acting on a space of smooth functions on a compact real interval is a multiple of the derivation.
\end{abstract}

\keywords{derivations, smooth functions}

\subjclass{Primary: 39B42; Secondary: 39B72, 39B82, 47B47}

\maketitle 

\section{Introduction}

By $\R$ we denote the set of reals, $\Q$ are rationals, $\Z$ are integers, $\N=\{1, 2, \ldots \}$ and $\N_0=\N \cup \{0 \}$. If $I\subseteq \R$ is an interval and $k\in \N_0$, then $C^k(I)$ is the space o real-valued functions on $I$ that are  $k$-times continuously differentiable on the interior of $I$. If $k=0$, then we write simply $C(I)$. The space $C^k(I)$ is furnished with the standard pointwise algebraic operations and hence it is a real commutative algebra. 

\medskip

\begin{df}[e.g. M. Kuczma \cite{Kuczma}*{page 391}]
Assume that $Q$ is a commutative ring and $P$ is a subring of $Q$. Function $f\colon P \to Q$ is called \emph{derivation} if it is \emph{additive}:
\begin{equation}
f(x+y) = f(x) + f(y), \quad x, y \in P
\label{add}
\end{equation}
and it satisfies the \emph{Leibniz rule}:
\begin{equation}
f(xy) = xf(y) + yf(x), \quad x, y \in P.
\label{der}
\end{equation}
\end{df}
The following theorem describes derivations over fields of characteristic zero.

\begin{thm}[\cite{Kuczma}*{Theorem 14.2.1}]
Let $K$  be a field of characteristic zero, $F$ be a subfield of $K$, $S$ be an algebraic base of $K$ over $F$ if it exists, and let $S = \varnothing$ otherwise. If $f \colon F \to K$ is a derivation, then, for every function $u\colon S \to K$ there exists a unique derivation $g\colon K \to K$ such that $g=f$ on $F$ and $g=u$ on $S$. 
\end{thm}

From this theorem it follows in particular that nonzero derivations $f\colon\R\to\R$ exist. It is well known they are discontinuous and very irregular mappings. For an exhaustive discussion of the notion of the derivation and related functional equations the reader is referred to E. Gselmann \cites{G1, G2}, E. Gselmann, G. Kiss, C. Vincze \cite{G3} and references therein. Recently B. Ebanks \cites{E1, E2} studied derivations and derivations of higher order on rings.

\medskip

The ''model'' example of a derivation is the operator of derivative on the space $C^k(I)$ for $k>0$ . Indeed, if we define  $T\colon C^k(I)\to C(I)$ as $T(f) = f'$ for $f \in C^k(I)$, then clearly $ C^k(I)$ is a subring of $C(I)$,  $T$ is additive and it satisfies the Leibniz rule:
\begin{equation}
T(f\cdot g) = f\cdot T(g) + g \cdot T(f).
\label{LR}
\end{equation}

Crucial results about equation \eqref{LR} on the space $C^k(I)$  are due to H. K\"{o}nig and V. Milman. We refer the reader to their recent monograph \cite{KM}. They studied several operator equations and inequalities that are related to the derivatives on the spaces of smooth functions. Later on, we will utilize their elegant result   \cite{KM}*{Theorem 3.1}  regarding \eqref{LR}. Briefly, if $I$ is an open set, then the general solution of \eqref{LR} for all $ f, g \in C^k(I)$ is of the form
\begin{equation}
T(f) =  c \cdot f\cdot\ln|f| + d \cdot f',\quad f \in C^k(I)
\label{k}
\end{equation}
for some continuous functions $c, d\in C(I)$, if $k>0$, and
\begin{equation}
T(f) =  c \cdot f\cdot\ln|f| ,\quad f \in C^k(I)
\label{0}
\end{equation}
if $k=0$ (in formulas \eqref{k} and \eqref{0} the convention that $0\cdot \ln 0 = 0$ is adopted). Note that no additivity is assumed.

\medskip

There is a natural question to characterize real-to-real derivations among additive functions with the aid of a relation which is weaker than \eqref{der}. In particular, the very first article published in the first volume of \emph{Aequationes Mathematicae} by A. Nishiyama and S. Horinouchi \cite{NH} addresses this question.
The authors studied the following relations, each of them is a direct consequence of \eqref{der} alone and together with \eqref{add} implies \eqref{der}:
\begin{equation}
f(x^2) = 2xf(x), \quad x \in \R,
\label{2}
\end{equation}
\begin{equation}
f(x^{-1}) = -x^{-2}f(x), \quad x \in \R,\,  x \neq 0,
\label{-1}
\end{equation}
and
\begin{equation}
f(x^n) = ax^{n-m}f(x^m), \quad x \in \R, \,  x \neq 0,
\label{n}
\end{equation}
where $a\neq 1$ and $n, m$ are integers such that $am = n\neq 0$. Further similar results, as well as some generalizations, are due to W. Jurkat \cite{J},  Pl. Kannappan and S. Kurepa \cites{KK1, KK2}, S. Kurepa \cite{K}, among others. B. Ebanks \cite{E3} generalized and extended these results to arbitrary fields. A recent paper by M. Amou \cite{A} provides some $n$-dimensional generalizations of the results of \cites{J, KK1, KK2, K}.

\medskip

This paper provides versions of the above-mentioned results for operators $T\colon C^k(I)\to C(I)$. Therefore, we seek conditions which are equivalent to \eqref{LR}.

\section{Main results}

Throughout this section let us fix $k\in \N_0$ and an interval $I\subseteq \R$.
We will study conditions upon an additive operator $T\colon C^k(I)\to C(I)$ which yield analogues to equations \eqref{2}, \eqref{-1} and \eqref{n}. Therefore, we will focus on the following operator relations:
\begin{equation}
T(f^2) = 2f \cdot T(f),
\label{T2}
\end{equation}
\begin{equation}
T(f) = -f^{2}\cdot T\(\frac{1}{f}\), 
\label{T-1}
\end{equation}
\begin{equation}
T(f^{n}) = nf^{n-1}\cdot T(f).
\label{Tn}
\end{equation}

Our first theorem is a simple observation that some reasonings concerning derivations from the real-to-real case can be extended to arbitrary commutative rings without substantial changes. We adopted parts of proof of \cite{Kuczma}*{Theorem 14.3.1}.

\begin{thm}\label{t1}
Assume that $Q$ is a commutative ring, $P$ is a subring of $Q$ and $T\colon P\to Q$ is an additive operator. Then, the following conditions are pairwise equivalent:
\begin{itemize}
	\item[$(i)$] $T$ satisfies $T(f^2) = 2f \cdot T(f)$ for all $f \in P$,
	\item[$(ii)$] $T$ satisfies $T(f\cdot g) = f\cdot T(g) + g \cdot T(f)$ for all $f, g \in P$,
	\item[$(iii)$] $T$ satisfies $T(f^{n}) = nf^{n-1}\cdot T(f)$ for all $f \in P$ and $n \in \N$.
\end{itemize}
\end{thm}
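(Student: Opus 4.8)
My plan is to establish the pairwise equivalence by proving the cyclic chain $(ii)\Rightarrow(iii)\Rightarrow(i)\Rightarrow(ii)$. Two of the three links are routine, and essentially the whole difficulty is concentrated in the last one.

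For $(ii)\Rightarrow(iii)$ I would induct on $n$. The cases $n=1$ and $n=2$ are immediate (the latter by setting $g=f$ in the Leibniz rule), and for the inductive step I apply $(ii)$ to the product $f\cdot f^n$:
\[
T(f^{n+1}) = f\cdot T(f^n) + f^n\cdot T(f) = n f^n\cdot T(f) + f^n\cdot T(f) = (n+1)f^n\cdot T(f).
\]
The link $(iii)\Rightarrow(i)$ needs nothing more than specializing $(iii)$ to $n=2$, which reproduces $T(f^2)=2f\cdot T(f)$ verbatim.

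The substantive step is $(i)\Rightarrow(ii)$, which I would obtain by polarization. Substituting $f+g$ for $f$ in $(i)$, expanding $(f+g)^2=f^2+2fg+g^2$, and using additivity of $T$ together with $T(2h)=2T(h)$, I get
\[
T(f^2)+2T(fg)+T(g^2) = 2(f+g)\cdot\(T(f)+T(g)\).
\]
Cancelling $T(f^2)=2f\cdot T(f)$ and $T(g^2)=2g\cdot T(g)$ from the two sides then reduces everything to
\[
2\,T(fg) = 2\(f\cdot T(g)+g\cdot T(f)\).
\]

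The main obstacle is the final cancellation of the factor $2$, i.e. the passage from this last identity to $(ii)$ itself. In the classical real-valued situation of \cite{Kuczma}*{Theorem 14.3.1} this is automatic, and it remains automatic in the case of interest here, since the target ring $Q=C(I)$ carries a real-algebra structure and is in particular free of $2$-torsion. I would therefore record explicitly that the argument uses injectivity of multiplication by $2$ on $Q$; without such a hypothesis condition $(i)$ is genuinely weaker than $(ii)$, so this is exactly the point at which the ``arbitrary commutative ring'' formulation should be read with the standing understanding that $Q$ has no $2$-torsion. Granting this, the three implications close up and the proof is complete.
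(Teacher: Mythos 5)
Your argument is correct and is essentially the paper's own proof: the same polarization of $(i)$ at $f+g$ for $(i)\Rightarrow(ii)$, the same induction on $n$ for $(ii)\Rightarrow(iii)$, and the same specialization $n=2$ for $(iii)\Rightarrow(i)$; running the cycle in a different order is immaterial. The one genuine difference is your explicit treatment of the final cancellation of the factor $2$, and you are right to insist on it: the paper's proof ends the implication $(i)\Rightarrow(ii)$ with ``after reductions we obtain \eqref{LR}'', silently dividing by $2$, which is not legitimate in an arbitrary commutative ring. Your claim that without such a hypothesis $(i)$ is genuinely weaker than $(ii)$ can be backed by a concrete example: take $P=Q=\mathbb{F}_2[x]$ and let $T\colon P\to Q$ be the additive map determined on monomials by $T(x^3)=1$ and $T(x^n)=0$ for $n\neq 3$. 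In characteristic $2$ every square satisfies $f(x)^2=f(x^2)$, hence contains only even powers of $x$, so $T(f^2)=0=2f\cdot T(f)$ and $(i)$ holds; yet $T(x\cdot x^2)=1\neq 0=x\cdot T(x^2)+x^2\cdot T(x)$, so $(ii)$ fails. (The implications $(ii)\Rightarrow(iii)\Rightarrow(i)$ need no torsion hypothesis, so the defect is exactly where you located it.) Thus the theorem as stated for arbitrary commutative rings really does require your additional standing assumption that multiplication by $2$ is injective on $Q$; in the paper's intended application $Q=C(I)$ is a $\Q$-algebra, so the hypothesis is automatic and Corollary \ref{c1} and the subsequent results are unaffected, but your version of the statement is the one that is actually true at the stated level of generality.
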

\begin{proof}
$(i)\Rightarrow (ii)$. 
Fix arbitrarily $f, g \in P$. By \eqref{T2} we get
$$
T((f+g)^2)= 2(f+g)\cdot T(f+g).
$$
Since $T$ is additive, then
$$ T(f^2) + 2T(f\cdot g) + T(g^2) = 2f\cdot T(f) + 2g \cdot T(f) + 2f\cdot T(g) + 2g\cdot T(g).$$
Using \eqref{T2} again, after reductions we obtain \eqref{LR}.

$(ii)\Rightarrow (iii)$. If $n=1$, then \eqref{Tn} reduces to an identity. Assume that \eqref{Tn} holds for some $n \in \N$ and all $f \in P$. Then, by \eqref{LR} and the induction hypothesis we have
\begin{align*}
T(f^{n+1}) &= T(f^n \cdot f) = f^n \cdot T(f) + f \cdot T(f^n) \\&=f^n\cdot T(f) + n f^{n-1+1} \cdot T(f) = (n+1)f^n\cdot T(f).
\end{align*}
$(iii)\Rightarrow (i)$. Take $n=2$.
\end{proof}

The next corollary will be utilized later on.

\begin{cor}\label{c1}
Assume that $T\colon C^k(I)\to C(I)$ is an additive operator. Then, the following conditions are pairwise equivalent:
\begin{itemize}
	\item[$(i)$] $T$ satisfies $T(f^2) = 2f \cdot T(f)$ for all $f \in C^k(I)$,
	\item[$(ii)$] $T$ satisfies $T(f\cdot g) = f\cdot T(g) + g \cdot T(f)$ for all $f, g \in C^k(I)$,
	\item[$(iii)$] $T$ satisfies  $T(f^{n}) = nf^{n-1}\cdot T(f)$ for all $f \in C^k(I)$ and $n \in \N$.
\end{itemize}
\end{cor}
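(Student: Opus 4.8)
The plan is to derive this corollary directly from Theorem~\ref{t1} by choosing the abstract ring and subring appropriately. The three conditions stated here are word-for-word the conditions $(i)$, $(ii)$ and $(iii)$ of Theorem~\ref{t1}, so the entire task reduces to checking that the present concrete setting is a special case of the abstract one, after which the pairwise equivalence is inherited with no additional argument.

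First I would take $Q = C(I)$ and $P = C^k(I)$. As recalled in the introduction, $C(I)$ equipped with the pointwise algebraic operations is a commutative ring (indeed a commutative real algebra), so it qualifies as the ring $Q$ of Theorem~\ref{t1}. Next I would confirm that $C^k(I)$ is a subring of $C(I)$: every $k$-times continuously differentiable function is in particular continuous, which gives the inclusion $C^k(I) \subseteq C(I)$, and $C^k(I)$ is closed under pointwise sums, differences and products, the last of these by the Leibniz formula for derivatives of a product. This is precisely the observation already made in the introduction that $C^k(I)$ is a subring of $C(I)$, so the hypothesis of Theorem~\ref{t1} that $P$ be a subring of $Q$ is met.

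With these identifications in place, an additive operator $T\colon C^k(I) \to C(I)$ is exactly an additive operator $T\colon P \to Q$ in the sense of Theorem~\ref{t1}, and conditions $(i)$, $(ii)$, $(iii)$ of the corollary coincide with the correspondingly numbered conditions of that theorem. The asserted equivalence therefore follows at once. I do not expect any genuine obstacle here: the only point deserving an explicit word is the closure of $C^k(I)$ under multiplication, which rests on the classical product rule for derivatives, while everything else is a verbatim reading of Theorem~\ref{t1}.
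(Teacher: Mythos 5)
Your proposal is correct and matches the paper's (implicit) argument exactly: the corollary is stated without proof precisely because it is the specialization of Theorem~\ref{t1} to $Q = C(I)$ and $P = C^k(I)$, the subring property having already been noted in the introduction. Your explicit verification that $C^k(I)$ is closed under pointwise operations is the only detail worth recording, and you handle it correctly.
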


Our next result characterizes the Leibniz rule \eqref{LR} on a domain restricted to functions separated from zero. Thus, we can consider conditions \eqref{T-1} and \eqref{Tn} for negative $n$, which involve the function $1/f$. The situation is a bit more complicated, but Theorem \ref{t2} below has a mainly technical role.

\begin{thm}\label{t2}
Assume that $T\colon C^k(I)\to C(I)$ is an additive operator and $\eps_1\in(0,1)$, $\eps_2\in(0,1)$ and $c\in (1, +\infty]$ are constants. Consider the following conditions:
\begin{itemize}
  \item[$(i)$]  $T$ satisfies $T(f) = -f^{2}\cdot T\(\frac{1}{f}\)$ for all $f \in C^k(I)$, $c>f>\eps_1$,
	\item[$(ii)$]  $T$ satisfies $T(f^2) = 2f \cdot T(f)$ for all $f \in C^k(I)$, $f>\eps_2$,
	\item[$(iii)$]  $T$ satisfies $T(f\cdot g) = f\cdot T(g) + g \cdot T(f)$ for all $f, g \in C^k(I)$, $f>\eps_2$, $g>\eps_2$,
	\item[$(iv)$]  $T$ satisfies  $T(f^{n}) = nf^{n-1}\cdot T(f)$ for all  $n \in \Z$ and all $f \in C^k(I)$ such that $\eps_2<f<1/\eps_2$,  and $f^{n-1}>\eps_2$ if $n>0$ and 
	$f^{n+1}>\eps_2$ if $n<0$. 
\end{itemize}
Then: $(i)$ with $c=+\infty$ implies $(ii)$ with $\eps_2> \sqrt{\eps_1}$,  $(ii)$ and $(iii)$ are equivalent, $(iii)$ implies $(iv)$, $(iv)$ implies $(i)$ with $\eps_1=\eps_2$ and $c= 1/\eps_2$.
\end{thm}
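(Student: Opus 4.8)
The four assertions form a cycle $(i)\Rightarrow(ii)\Leftrightarrow(iii)\Rightarrow(iv)\Rightarrow(i)$, so once they are established the conditions become equivalent up to the indicated change of constants. The genuine analytic content sits in the first implication, where a squaring relation must be manufactured out of a reciprocal relation; the remaining implications are operator versions of the classical arguments behind Theorem \ref{t1}, and there the only real work is to check that every function to which a hypothesis is applied stays inside its prescribed range. Throughout I would first record that $T(1)=0$: under $(i)$ this follows by putting $f\equiv 1$ (legitimate since $\eps_1<1<c$), giving $T(1)=-T(1)$, and under $(iii)$ it follows from $T(1)=T(1\cdot 1)=2T(1)$.

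For $(i)\Rightarrow(ii)$ I would exploit the partial-fraction identity
\begin{equation*}
\frac{1}{f}-\frac{1}{f+1}=\frac{1}{f(f+1)}=\frac{1}{f^2+f},
\end{equation*}
valid pointwise for any $f>\eps_2$. Applying additivity to the left-hand side and then the hypothesis $(i)$ — in the rearranged form $T(1/h)=-h^{-2}T(h)$ — to each of the three arguments $h=f$, $h=f+1$ and $h=f^2+f$, one obtains
\begin{equation*}
-\frac{T(f^2+f)}{(f^2+f)^2}=-\frac{T(f)}{f^2}+\frac{T(f+1)}{(f+1)^2}.
\end{equation*}
Using $T(f^2+f)=T(f^2)+T(f)$, $T(f+1)=T(f)$ and $(f^2+f)^2=f^2(f+1)^2$, I would multiply through by $-f^2(f+1)^2$ to collapse this to $T(f^2)+T(f)=\bigl((f+1)^2-f^2\bigr)T(f)=(2f+1)T(f)$, i.e. $T(f^2)=2fT(f)$. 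The hypothesis $c=+\infty$ is used precisely so that $(i)$ is available with no upper bound on $h$, while the requirement $\eps_2>\sqrt{\eps_1}$ guarantees $f>\eps_1$, hence also $f+1>\eps_1$ and $f^2+f>\eps_1$, so that $(i)$ legitimately applies to all three arguments.

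The equivalence $(ii)\Leftrightarrow(iii)$ is the restricted form of $(i)\Leftrightarrow(ii)$ in Theorem \ref{t1}: for $(iii)\Rightarrow(ii)$ take $g=f$, and for $(ii)\Rightarrow(iii)$ expand $T((f+g)^2)$ by additivity and $(ii)$, the point being that $f,g>\eps_2$ forces $f+g>\eps_2$ as well, so $(ii)$ may be applied to $f$, $g$ and $f+g$ alike. The implication $(iv)\Rightarrow(i)$ is then immediate: the case $n=-1$ of $(iv)$, whose side condition $f^{n+1}=f^0=1>\eps_2$ holds automatically, reads $T(1/f)=-f^{-2}T(f)$ for $\eps_2<f<1/\eps_2$, which is exactly $(i)$ with $\eps_1=\eps_2$ and $c=1/\eps_2$.

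The implication $(iii)\Rightarrow(iv)$ is where the bookkeeping is heaviest, and I expect the negative exponents to be the main obstacle. For $n\ge 1$ I would argue by induction on $n$ exactly as in Theorem \ref{t1}, writing $f^{n}=f^{n-1}\cdot f$ and invoking $(iii)$; the side condition $f^{n-1}>\eps_2$ is what licenses this product, and since the powers $f^{j}$ are monotone in $j$ (increasing if $f\ge 1$, decreasing if $f<1$), the hypothesis $f^{n-1}>\eps_2$ automatically secures $f^{j}>\eps_2$ for all $0\le j\le n-1$, so the induction hypothesis always applies. For the reciprocal direction I would first deduce from $(iii)$ and $T(1)=0$ the relation $T(1/h)=-h^{-2}T(h)$, valid whenever $\eps_2<h<1/\eps_2$ (apply Leibniz to $h\cdot h^{-1}=1$); this settles $n=-1$. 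For $n\le -2$ I would use downward induction based on the factorization $f^{n}=f^{n+1}\cdot f^{-1}$: given the side condition $f^{n+1}>\eps_2$ together with $f^{-1}>\eps_2$, Leibniz yields $T(f^{n})=f^{n+1}T(f^{-1})+f^{-1}T(f^{n+1})$, and substituting the $n=-1$ relation and the inductive formula for $T(f^{n+1})$ gives $nf^{n-1}T(f)$ after simplification. The delicate point I would treat most carefully is that the side condition for exponent $n+1$, namely $f^{n+2}>\eps_2$, must follow from the data $f^{n+1}>\eps_2$ and $\eps_2<f<1/\eps_2$; this holds because $f^{n+2}=f\cdot f^{n+1}>f^{n+1}>\eps_2$ when $f\ge 1$, while for $f<1$ every negative power of $f$ exceeds $1>\eps_2$ outright, so the induction closes in either regime.
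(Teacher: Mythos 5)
Your proof is correct, and on the one implication that carries real analytic content it takes a genuinely different route from the paper. For $(i)\Rightarrow(ii)$ the paper picks a rational $\delta>0$ with $\eps_2>\eps_1+\delta$ and $\eps_2^2>\eps_1+\delta^2$ and uses the identity $\frac{1}{f^2-\delta^2}=\frac{1}{2\delta}\bigl(\frac{1}{f-\delta}-\frac{1}{f+\delta}\bigr)$, applying $(i)$ to $f^2-\delta^2$ and $f\pm\delta$; this forces it to invoke the $\Q$-homogeneity of the additive operator $T$ (both to extract the factor $\frac{1}{2\delta}$ and to kill the rational constants $\delta$, $\delta^2$), and the requirement $f^2-\delta^2>\eps_1$ is precisely where the threshold $\eps_2>\sqrt{\eps_1}$ originates. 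Your decomposition $\frac{1}{f}-\frac{1}{f+1}=\frac{1}{f^2+f}$ accomplishes the same polarization with no auxiliary $\delta$: it uses only additivity and $T(1)=0$, and since $f+1>1>\eps_1$ and $f^2+f>f$, every argument to which $(i)$ is applied exceeds $\eps_1$ as soon as $f>\eps_1$. In fact your computation establishes $(ii)$ for all $f>\eps_1$, which is stronger than the stated conclusion (any $\eps_2>\sqrt{\eps_1}$), so the constant $\sqrt{\eps_1}$ turns out to be an artifact of the paper's particular identity rather than of the statement. The remaining implications coincide with the paper's proof: $(ii)\Leftrightarrow(iii)$ by expanding $T((f+g)^2)$ as in Theorem \ref{t1}, $(iii)\Rightarrow(iv)$ by upward induction for $n>1$, the cases $n=0$ and $n=-1$ from $T(1)=0$ and the Leibniz rule applied to $f\cdot\frac{1}{f}$, downward induction for $n<-1$, and $(iv)\Rightarrow(i)$ by taking $n=-1$. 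One point where you are more careful than the paper: in the downward induction the paper merely says ``one can check'' that the hypotheses propagate, whereas you verify explicitly that the side condition $f^{n+2}>\eps_2$ needed for the inductive step follows from $f^{n+1}>\eps_2$ together with $\eps_2<f<1/\eps_2$ (separating the regimes $f\ge 1$ and $f<1$), which is exactly the bookkeeping required to close the induction; the same remark applies to your observation that $f^{n-1}>\eps_2$ secures $f^{j}>\eps_2$ for all $0\le j\le n-1$ in the upward induction.
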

\begin{proof}
$(i)\Rightarrow (ii)$. First, note that by applying \eqref{T-1} for $f=1$ and using the rational homogeneity of $T$ we get that $T$ vanishes on each constant function equal to a rational number. Observe that for arbitrary rational $\delta >0$ (which will be chosen later) the identity
\begin{equation}
\frac{1}{f^2-\delta^2} = \frac{1}{2\delta}\( \frac{1}{f-\delta}- \frac{1}{f+\delta} \)
\label{delta}
\end{equation}
holds for $f \in C^k(I)$ such that $f>\delta$. Next, if $\eps_1>0$ is given and $\eps_2> \sqrt{\eps_1}$, then we will find some rational $\delta>0$ such that $\eps_2>\eps_1+\delta$ and $\eps_2^2>\eps_1+\delta^2$. Consequently, if $f \in C^k(I)$ and $f>\eps_2$, then $f\pm \delta >\eps_1$ and $f^2-\delta ^2 >\eps_1$. Using $(i)$ three times together with \eqref{delta} and the additivity of $T$ we obtain
\begin{align*}
T(f^2)&=T(f^2-\delta^2)= -(f^2-\delta^2)^2T\(\frac{1}{f^2-\delta^2}\) \\&= -\frac{1}{2\delta}(f^2-\delta^2)^2T\( \frac{1}{f-\delta}- \frac{1}{f+\delta}\) \\&= -\frac{1}{2\delta}(f+\delta)^2(f-\delta)^2\[T\( \frac{1}{f-\delta}\)- T\(\frac{1}{f+\delta}\)\] \\&=\frac{1}{2\delta}\[(f+\delta)^2T(f-\delta) - (f-\delta)^2T(f+\delta)\] = 2fT(f).
\end{align*}
$(ii)\Leftrightarrow (iii)$. Analogously as in Theorem \ref{t1} for $f>\eps_2$ and $g>\eps_2$.
\\
$(iii)\Rightarrow (iv)$. If $n=1$, then \eqref{Tn} is trivially satisfied. Assume that $f$, $n$ and $\eps_2$ satisfy assumptions of $(iv)$. For $n>1$ we proceed like in Theorem \ref{t1}. If $n=0$, then $(iv)$ reduces to $T(1)=0$, which follows from $(iii)$. If $n=-1$, then for $1/\eps_2>f>\eps_2$ we have
$$0=T(1) = T\(f\cdot \frac{1}{f}\)= \frac{1}{f}\cdot T(f) + f\cdot T\(\frac{1}{f}\).$$  

Assume that $n<-1$. By downward induction, one can check that for $f^{n+1}>\eps_2$ we have from \eqref{LR}
\begin{align*} 
T(f^n) &= T\(f^{n+1}\cdot \frac{1}{f}\) = f^{n+1}\cdot T\(\frac{1}{f}\) + \frac{1}{f}\cdot T\(f^{n+1}\) \\&=-f^{n+1}\cdot f^{-2} T(f) + \frac{n+1}{f}\cdot f^n\cdot T\(f\) = nf^{n-1}T(f).
\end{align*}
$(iv)\Rightarrow (i)$. Take $n=-1$.
\end{proof}

If we assume additionally that interval $I$ is compact, then the situation clarifies considerably. 

\begin{thm}\label{t3}
Assume that $I$ is compact and $T\colon C^k(I)\to C(I)$ is an additive operator. Then, the following conditions are pairwise equivalent:
\begin{itemize}
  \item[$(i)$] $T$ satisfies  $T(f\cdot g) = f\cdot T(g) + g \cdot T(f)$ for all $f, g \in C^k(I)$,
	\item[$(ii)$] $T$ satisfies  $T(f\cdot g) = f\cdot T(g) + g \cdot T(f)$ for all $f, g \in C^k(I), f>0$, $g>0$,
	\item[$(iii)$] $T$ satisfies $T(f^2) = 2f \cdot T(f)$  for all $f \in C^k(I)$, 
	\item[$(iv)$] $T$ satisfies  $T(f^2) = 2f \cdot T(f)$  for all $f \in C^k(I)$, $f>0$, 
	\item[$(v)$]  $T$ satisfies $T(f) = -f^{2}\cdot T\(\frac{1}{f}\)$ for all $f \in C^k(I)$, $f>0$,
	\item[$(vi)$] $T$ satisfies $T(f^{n}) = nf^{n-1}\cdot T(f)$ for all $f \in C^k(I)$ and $n \in \N$,
	\item[$(vii)$] $T$ satisfies $T(f^{n}) = nf^{n-1}\cdot T(f)$ for all $f \in C^k(I)$, $f>0$ and $n \in \N$.
\end{itemize}
\end{thm}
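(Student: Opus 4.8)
The plan is to first dispose of the ``global'' conditions and then use compactness to identify them with their ``positive'' counterparts. By Corollary \ref{c1} the three conditions imposed on all of $C^k(I)$, namely $(i)$, $(iii)$ and $(vi)$, are already pairwise equivalent, so it suffices to attach each of the remaining four conditions $(ii)$, $(iv)$, $(v)$, $(vii)$ to this equivalence class. Half of the required implications are immediate: restricting the domain of a hypothesis to positive functions only weakens it, so $(i)\Rightarrow (ii)$, $(iii)\Rightarrow (iv)$ and $(vi)\Rightarrow (vii)$. Moreover, setting $g=f$ in $(ii)$ gives $(iv)$, and putting $n=2$ in $(vii)$ gives $(iv)$; thus $(ii)$ and $(vii)$ will be absorbed into the class as soon as $(iv)$ is. The whole theorem therefore reduces to two genuinely new implications that turn a statement about strictly positive functions into a statement about arbitrary ones.

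The first, and the heart of the argument, is $(iv)\Rightarrow (iii)$. Here I would exploit compactness through an additive shift. Since $I$ is compact, every $f\in C^k(I)$ is bounded, so I can pick a rational constant $M$ with $f+M>0$ on $I$; note first that putting $f=1$ in $(iv)$ forces $T(1)=0$, whence $T(q)=qT(1)=0$ for every rational $q$ by additivity. Applying $(iv)$ to the positive function $g:=f+M$ and expanding $g^2=f^2+2Mf+M^2$, using additivity, the rational homogeneity of $T$ and $T(M)=T(M^2)=0$, the terms $2MT(f)$ cancel on both sides and one is left with $T(f^2)=2fT(f)$ for the arbitrary $f$. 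This yields $(iii)$, so $(iv)$ joins the equivalence class, and with it $(ii)$ and $(vii)$.

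It remains to place $(v)$, the reciprocal relation, and this is where Theorem \ref{t2} and compactness combine. In the easy direction, $(i)$ gives $(v)$ directly: for $f>0$ compactness guarantees $f\ge\min_I f>0$, so $1/f\in C^k(I)$, and the Leibniz rule applied to the pair $f,1/f$ together with $T(1)=0$ rearranges to $T(f)=-f^2T(1/f)$. For the converse I would feed $(v)$ into Theorem \ref{t2}. For each fixed $\eps_1\in(0,1)$ the hypothesis $(v)$ is stronger than condition $(i)$ of Theorem \ref{t2} with $c=+\infty$ (every $f>\eps_1$ is in particular positive), so that theorem yields its condition $(ii)$, i.e. $T(f^2)=2fT(f)$ for all $f>\eps_2$, with some $\eps_2>\sqrt{\eps_1}$. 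Letting $\eps_1\to 0^+$ makes $\eps_2$ arbitrarily small; given any target $f>0$, compactness provides $\min_I f>0$, and choosing $\eps_1$ small enough that $\eps_2<\min_I f$ places $f$ in the range where the identity holds. Hence $(v)\Rightarrow (iv)$, closing the last loop.

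The step I expect to be the crux is $(iv)\Rightarrow (iii)$: it is exactly the point at which the restriction $f>0$ is removed, and it works only because compactness bounds $f$, allowing the shift by a constant, together with the observation that $T$ kills rational constants, so that the shift leaves $T(f^2)-2fT(f)$ unchanged. The role of compactness elsewhere is the complementary one of turning the pointwise inequality $f>0$ into a uniform bound $f\ge\min_I f>0$, which is what lets me both form $1/f$ inside $C^k(I)$ and match the fixed thresholds $\eps_1,\eps_2$ demanded by Theorem \ref{t2}.
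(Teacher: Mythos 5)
Your proposal is correct, and it rests on the same pillars as the paper's proof: Corollary \ref{c1} for the unrestricted conditions $(i)$, $(iii)$, $(vi)$, Theorem \ref{t2} as the nontrivial input for the reciprocal condition, compactness to bound the target function, and the observation that additivity plus any of the conditions forces $T$ to vanish on rational constants. The decomposition, however, is leaner than the paper's. The paper funnels all four restricted conditions $(ii)$, $(iv)$, $(v)$, $(vii)$ through Theorem \ref{t2} at once, using the affine substitution $1/2<rf+q<2$ (the two-sided window is needed to match Theorem \ref{t2}'s two-sided hypotheses), and summarizes this as ``Theorem \ref{t2} applies with appropriately chosen $\eps_1$ and $\eps_2$.'' You instead invoke Theorem \ref{t2} exactly once, for its implication $(i)\Rightarrow(ii)$, and handle everything else by elementary specializations: $(ii)$ and $(vii)$ are absorbed via $g=f$ and $n=2$, and the forward direction to $(v)$ comes from applying the Leibniz rule directly to the pair $\left(f,1/f\right)$ with $T(1)=0$. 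Your key step $(iv)\Rightarrow(iii)$ uses a pure rational shift $f+M$ rather than the paper's affine map $rf+q$; this suffices because that step needs only positivity, not a two-sided bound, and the cancellation of $2MT(f)$ is the same cancellation the paper performs with $r$ and $q$. Finally, your $(v)\Rightarrow(iv)$, with its $f$-dependent choice of $\eps_1$ so that $\eps_2<\min_I f$, makes explicit the threshold-matching that the paper leaves implicit; this is arguably a gain in rigor, since it is exactly the point where compactness is used, and it sidesteps any need for Theorem \ref{t2}'s implications involving negative exponents (where the correspondence with condition $(vii)$, which quantifies only over $n\in\N$, is not immediate).
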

\begin{proof}
This statement is a consequence of  Corollary \ref{c1} and Theorem \ref{t2}. Since $I$ is compact, then  $f$ attains its global extrema. Thus, we will find some rational $r, q \in \Q$ such that $1/2<rf+q<2$.
Moreover, as it was already observed in the proof of Theorem \ref{t2}, each of the conditions of Theorem \ref{t3} implies that $T(1)=0$ and then $T$ vanishes on constant function equal to a rational number. 
Consequently, we have $T(rf+q)=rT(f) + T(q) = rT(f)$ and therefore Theorem \ref{t2} applies for the conditions $(ii)$, $(iv)$, $(v)$ and $(vii)$ with appropriately chosen $\eps_1$ and $\eps_2$. The remaining conditions are equivalent by Corollary \ref{c1}. Therefore, we are done if we prove for example the implication $(iv)\Rightarrow (iii)$. 

Fix $f\in C^k(I)$ arbitrarily and choose $r, q \in \Q$ such that $1/2<rf+q<2$. By $(iv)$ we get
$$T((rf+q)^2)= 2(rf+q) T(rf+q).$$
Then using additivity we obtain
$$r^2T(f^2) + 2rqT(f) + T(q^2) = 2r^2fT(f) + 2rq T(f)$$
and after reduction
$$T(f^2) + 0 = 2fT(f)$$
i.e. condition $(iii)$.
\end{proof}

One can join Corollary \ref{c1} and Theorem \ref{t3} with the mentioned result of H. K\"{o}nig and V. Milman to obtain a corollary. 

\begin{cor}\label{c2}
Under assumptions of Corollary \ref{c1} or Theorem \ref{t3}, if $k>0$, then each of the conditions listed there is equivalent to the following one:
\begin{itemize}
	\item[$(x)$] there exists some $d\in C(I)$ such that $T(f) = d\cdot f'$ for all $f \in C^k(I)$
\end{itemize}
and if $k=0$, then $T=0$ is the only additive operator that fulfils any of the equivalent conditions.
\end{cor}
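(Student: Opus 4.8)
The plan is to collapse all of the listed conditions onto the Leibniz rule \eqref{LR} and then to read off the form of $T$ from the theorem of König and Milman. By Corollary \ref{c1} or by Theorem \ref{t3}, every one of the conditions listed there is equivalent to the requirement that the additive operator $T$ satisfy the Leibniz rule \eqref{LR} for all $f,g\in C^k(I)$. It therefore suffices to prove that, for additive $T$, the Leibniz rule \eqref{LR} is equivalent to $(x)$ when $k>0$, and that \eqref{LR} forces $T=0$ when $k=0$. One implication is immediate: if $T(f)=d\cdot f'$ for some $d\in C(I)$, then $T$ is additive and, by the product rule,
$$
T(f\cdot g)=d\cdot(f\cdot g)'=d\cdot\bigl(f'\cdot g+f\cdot g'\bigr)=g\cdot(d\cdot f')+f\cdot(d\cdot g')=g\cdot T(f)+f\cdot T(g),
$$
so that $(x)$ implies \eqref{LR}.

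For the converse I would invoke \cite{KM}*{Theorem 3.1}. Its formulas \eqref{k} and \eqref{0} are stated for open sets, so I would apply the result on the interior of $I$, where differentiation is meaningful; since the König--Milman representation is pointwise in $x$ and determined only by the value and first derivative of $f$ at $x$ --- data realized by functions of $C^k(I)$ at every interior point --- one obtains continuous coefficients $c$ and $d$ with $T(f)=c\cdot f\cdot\ln|f|+d\cdot f'$ if $k>0$, and $T(f)=c\cdot f\cdot\ln|f|$ if $k=0$, on the interior of $I$. The decisive step is then to use additivity, which König and Milman do not assume. As $f\mapsto d\cdot f'$ is additive, the map $f\mapsto c\cdot f\cdot\ln|f|$ must be additive too. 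Evaluating it on the constant functions $1$ and $2=1+1$ gives, pointwise, $c\cdot 2\ln 2=0$, whence $c\equiv0$ because $2\ln 2\neq0$. Thus for $k>0$ we obtain $T(f)=d\cdot f'$ on the interior, and this identity extends to all of $I$ by continuity of $T(f)$, which is exactly $(x)$. For $k=0$ we obtain $T\equiv0$; since $T=0$ trivially satisfies each of the listed conditions, it is indeed the unique additive operator fulfilling any of them.

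The only genuine difficulty I foresee is the transition from the open-interval hypothesis underlying \eqref{k}--\eqref{0} to a general or compact interval $I$: one has to justify that the König--Milman coefficients may be produced pointwise on the interior from the one-jet of $f$, which is available within $C^k(I)$, and that the resulting representation reaches the endpoints of $I$ by continuity. Once the two-term representation is secured, the elimination of the logarithmic summand by additivity is routine.
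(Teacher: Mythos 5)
Your proposal is correct and follows essentially the same route as the paper: reduce every listed condition to the Leibniz rule via Corollary \ref{c1}/Theorem \ref{t3}, apply the K\"onig--Milman representation on the interior of $I$, eliminate the term $c\cdot f\cdot\ln|f|$ by additivity, and then pass to the endpoints. The one step you leave slightly open --- why the coefficient $d$ itself extends continuously to all of $I$ --- is settled in the paper by the simple device of taking $f(x)=x$, so that $\tilde{d}:=T(f)\in C(I)$ is automatically the required continuous extension of $d$.
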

\begin{proof}
Consider $f(x) = x$ on $I$ and denote $\tilde{d}:= T(f)\in C(I)$. Next, note that by \cite{KM}*{Theorem 3.1} the formulas \eqref{k} and \eqref{0}, respectively hold on the interior of $I$ with some $c, d \in C(\mathrm{int} I)$. The additivity of $T$ implies that $c=0$. Therefore $\tilde{d}$ is a continuous extension of $d$ to the whole interval $I$.
\end{proof}

\section{Final remarks}

\begin{rem}
Inequalities between $f$, $g$ and constants $\eps_1$ and $\eps_2$ in Theorem \ref{t2} are not optimal. This however was not our goal since the role of this result is auxiliary only. Similarly, inequality $f>0$ in some of the conditions of Theorem \ref{t3} can be equivalently replaced by an estimate from above or from below by any other fixed constant.

Moreover, in the proof of Theorem \ref{t3} we showed more than is stated. Namely, it is equivalently enough to assume instead $f>0$ that $f$ is bilaterally bounded by two rational numbers, like $1/2$ and $2$. However, since this generalization is apparent only and easy, we do not include it in the formulation of the theorem.
\end{rem}

\begin{ex}
Assume that $\f\colon (1, \infty) \to \R$ is a smooth mapping that satisfies equation
\begin{equation}
\f(2x) = 2 \f(x), \quad x \in (1, \infty).
\label{dwa}
\end{equation} Such mappings exist in abundance. In fact, every map $\f_0$ defined on $(1,2]$ can be uniquely extended to a solution of \eqref{dwa}. Next, let $d\colon (e, \infty) \to \R$ be defined as
$$
d(x) = x \cdot \f(\ln x), \quad x \in (e, \infty).
$$
We see easily that 
$$d(x^2) = 2x d(x), \quad x \in (e, \infty)
$$
and 
$$d(xy) \neq x d(y) + yd(x)$$ in general, unless $\f$ is additive.
Define $T\colon C^1((e, \infty))\to C((e, \infty))$ as follows:
$$ T(f) = d \circ f , \quad f \in C^1((e, \infty)).$$
One can see that $T$ satisfies \eqref{T2} for all $f, g \in C((e, \infty))$, but fails to satisfy the Leibniz rule \eqref{LR}. 
Thus, the assumption of additivity in our all results is essential.
Observe also that $T$ has the property that it vanishes on constant functions equal to a rational. This fact, as a consequence of additivity, was frequently used in the proofs of our Theorems \ref{t2} and \ref{t3}. Therefore, the additivity assumption cannot be relaxed to this property.
\end{ex}

\begin{ex}
Assume that $I$ is an interval and $T$ is given by the formula
$$ T(f) = f'' - \frac{(f')^2}{f}, \quad f \in C^2(I), \, f>0. $$
Then $T$ satisfies \eqref{LR} for all $f, g \in C^2(I)$ such that $f>0$ and $g>0$. This observation is a particular case of the second part of \cite{KM}*{Corollary 3.4}. Clearly, $T$ is not additive. Moreover, $T$ cannot be extended in such a way it satisfies \eqref{LR} on the whole space $C^2(I)$.
\end{ex}

The following examples show that if the domain of operator $T$ is changed, then the conditions discussed in our results are no longer equivalent and various situations are possible.

\begin{ex}
Let $\S$ be the space of all functions $f \in C^1((0, \infty))$ which satisfy functional equation
\begin{equation}
f(x+1) = 2f(x), \quad x \in (0, \infty).
\label{+1}
\end{equation}
Note that $\S$ is not closed under multiplication. Moreover, each function $f_0\colon (0, 1] \to \R$ can be uniquely extended to a solution of \eqref{+1}. Therefore, $\S$ is an infinite-dimensional subspace of $C^1((0, \infty))$. Define $T\colon C^1((0, \infty)) \to C^1((0, \infty))$ by the formula
$$T(f)(x) = f(x+1), \quad f \in C^1((0, \infty)), \, x \in (0, \infty).$$
It is easy to check that $T$ is additive and satisfies \eqref{LR} for $f, g \in \S$. Thus, there are more solutions of \eqref{LR} if the domain of $T$ is restricted to a particular subspace of $C^k(I)$.
\end{ex}

\begin{ex}
Let $P[x]$ be the space of all real polynomials of variable $x$. By $\deg(f)$ we denote the degree of a polynomial $f\in P[x]$. Define $T\colon P[x] \to P[x]$ by
$$T(f) = \deg (f) \cdot f, \quad f \in P[x].$$
Then $T$ is not additive, it satisfies \eqref{LR} and has no extension to a solution of \eqref{LR} to $C^k(\R)$.
\end{ex}

\begin{ex}
Let $$\S:=\{f\colon (0, \infty)\to\R : f(x) = x^k \textrm{ for some } k \in \Z \textrm{ and } x \in (0, \infty)   \}.$$
Note that $\S$ is closed under multiplication but it is not a linear space. Next, let a double sequence $\f$ on $\Z$ of natural numbers be defined as follows: $\f(0)=0$,  $\f(k)$ is arbitrary but $\neq k$ if $k$ is odd, and if $k=2^n \cdot m$ with some $n \in \N$ and odd $m\in \Z$, then $$\f(k) := 2^{\frac{n^2-n}{2}}\cdot m^n \cdot \f (m).$$ Note that we have
\begin{align}
\f(2k) &= \f(2^{n+1}\cdot m) = 2^{\frac{n^2+n}{2}} \cdot m^{n+1} \cdot  \f(m) \nonumber \\&= 2^n \cdot m \cdot 2^{\frac{n^2-n}{2}} \cdot m^n \cdot  \f(m) = k\cdot \f(k), \quad k \in \Z.
\label{2k}
\end{align}
Define $T\colon\S\to C((0, \infty))$ by
\begin{equation}
T(f)(x)  := k \cdot x^{\f(k)} , \quad x \in (0, \infty)
\label{dupa}
\end{equation}
if $f(x) = x^k$ for $ x \in (0, \infty)$. One can see that if $f$ is of this form, then by \eqref{2k}
$$T(f^2)(x) = 2k \cdot x^{\f(2k)} = 2k \cdot x^{k\cdot \f(k)} = 2f (x) T(f)(x)$$ for all $x \in (0, \infty)$, i.e. $T$ satisfies \eqref{T2}. 

Moreover, one can see that \eqref{T-1} is equivalent to the equality
$$\f(k) - \f(-k) = 2k, \quad k \in \Z, \, k \neq 0.$$
Therefore, we can construct a sequence $\f$ such that $T$ defined by \eqref{dupa} satisfies \eqref{T-1} as well as another sequence $\f'$ for which $T$ does not satisfy \eqref{T-1}. 
Finally, \eqref{LR} is not true on $\S$. Indeed, note that if \eqref{LR} is satisfied by $T$ given by \eqref{dupa}, then:
$$ \f(k+l) = \f(k) + l = \f(l) + k, \quad k, l \in \Z, k \neq 0, l \neq 0,$$
which does not hold.
\end{ex}

\begin{bibdiv}
\begin{biblist}

\bib{A}{article}{
   author={Amou, Masaaki},
   title={Multiadditive functions satisfying certain functional equations},
   journal={Aequationes Math.},
   volume={93},
   date={2019},
   number={2},
   pages={345--350},
   issn={0001-9054},
}

\bib{E1}{article}{
   author={Ebanks, Bruce},
   title={Derivations and Leibniz differences on rings},
   journal={Aequationes Math.},
   volume={93},
   date={2019},
   number={3},
   pages={629--640},
   issn={0001-9054},
}

\bib{E2}{article}{
   author={Ebanks, Bruce},
   title={Derivations and Leibniz differences on rings: II},
   journal={Aequationes Math.},
   volume={93},
   date={2019},
   number={6},
   pages={1127--1138},
   issn={0001-9054},
}

\bib{E3}{article}{
   author={Ebanks, Bruce},
   title={Functional equations characterizing derivations and homomorphisms on fields},
   journal={Results Math.},
   volume={74},
   date={2019},
   number={4},
   pages={Paper No. 146, 12},
   issn={1422-6383},
}

\bib{G1}{article}{
   author={Gselmann, Eszter},
   title={Notes on the characterization of derivations},
   journal={Acta Sci. Math. (Szeged)},
   volume={78},
   date={2012},
   number={1-2},
   pages={137--145},
   issn={0001-6969},
}

\bib{G2}{article}{
   author={Gselmann, Eszter},
   title={Characterizations of derivations},
   journal={Dissertationes Math.},
   volume={539},
   date={2019},
   pages={65},
   issn={0012-3862},
}

\bib{G3}{article}{
   author={Gselmann, Eszter},
   author={Kiss, Gergely},
   author={Vincze, Csaba},
   title={On functional equations characterizing derivations: methods and
   examples},
   journal={Results Math.},
   volume={73},
   date={2018},
   number={2},
   pages={Paper No. 74, 27},
   issn={1422-6383},
}

\bib{J}{article}{    author={Jurkat, Wolfgang B.},    title={On Cauchy's functional equation},    journal={Proc. Amer. Math. Soc.},    volume={16},    date={1965},    pages={683--686},    issn={0002-9939},   
}

\bib{KK1}{article}{
   author={Kannappan, Palaniappan},
   author={Kurepa, Svetozar},
   title={Some relations between additive functions. I},
   journal={Aequationes Math.},
   volume={4},
   date={1970},
   pages={163--175},
   issn={0001-9054},
}

\bib{KK2}{article}{
   author={Kannappan, Palaniappan},
   author={Kurepa, Svetozar},
   title={Some relations between additive functions. II},
   journal={Aequationes Math.},
   volume={6},
   date={1971},
   pages={46--58},
   issn={0001-9054},
}

\bib{KM}{book}{
   author={K\"{o}nig, Hermann},
   author={Milman, Vitali},
   title={Operator relations characterizing derivatives},
   publisher={Birkh\"{a}user/Springer, Cham},
   date={2018},
   pages={vi+191},
   isbn={978-3-030-00240-4},
   isbn={978-3-030-00241-1},
}

\bib{Kuczma}{book}{
   author={Kuczma, Marek},
   title={An introduction to the theory of functional equations and
   inequalities},
   edition={2},
   note={Cauchy's equation and Jensen's inequality;
   Edited and with a preface by Attila Gil\'{a}nyi},
   publisher={Birkh\"{a}user Verlag, Basel},
   date={2009},
   pages={xiv+595},
   isbn={978-3-7643-8748-8},
}

\bib{K}{article}{
   author={Kurepa, Svetozar},
   title={The Cauchy functional equation and scalar product in vector
   spaces},
   language={English, with Serbo-Croatian summary},
   journal={Glasnik Mat.-Fiz. Astronom. Dru\v{s}tvo Mat. Fiz. Hrvatske Ser. II},
   volume={19},
   date={1964},
   pages={23--36},
   issn={0367-4789},
}

\bib{NH}{article}{
   author={Nishiyama, Akinori},
   author={Horinouchi, S\^{o}ichi},
   title={On a system of functional equations},
   journal={Aequationes Math.},
   volume={1},
   date={1968},
   pages={1--5},
   issn={0001-9054},
}

\end{biblist}
\end{bibdiv}

\end{document}